\numberwithin{equation}{section}
\newtheorem{theorem}{Theorem}[section]
\newtheorem{cor}[theorem]{Corollary}
\newtheorem{prop}[theorem]{Proposition}
\theoremstyle{definition}
\newtheorem{df}[theorem]{Definition}
\newtheorem*{acknowledgments}{Acknowledgments}
\theoremstyle{remark}
\newtheorem{example}{Example}
\begin{document}
 \author{Gabriel B\u adi\c toiu}
\address{G. B\u adi\c toiu, ``Simion Stoilow'' Institute of Mathematics  of
the Romanian Academy, P.O. Box 1-764, 014700 Bucharest, Romania
\tt{Gabriel.Baditoiu@imar.ro}}
 \author{Stere Ianu\c s}
\address{S. Ianu\c s, Faculty of Mathematics, University of Bucharest, Romania}
 \author{Anna Maria Pastore}
\address{A.M. Pastore, Dipartimento di Matematica, Universit\` a di Bari,
Italia \tt{pastore@dm.uniba.it}}
\title[Spectral geometry of Riemannian Legendre foliations]{Spectral geometry of Riemannian Legendre foliations}
 \subjclass[2010]{53C12, 53C25, 58J50}
 \keywords{Riemannian Legendre foliation; isospectral foliations; Sasakian metric}
\thanks{Stere Ianu\c s passed away on April 8th, 2010.}

\begin{abstract}
We obtain geometric characterizations of isospectral minimal Riemannian Legendre foliations on
compact Sasakian manifolds of constant $\varphi$-sectional curvature.
\end{abstract}
\maketitle

\section{Preliminaries}
Let  $\mathcal F$ be a Riemannian foliation on an $m$-dimensional compact Riemannian manifold $(M,g)$.
  We denote by $L$ and $Q=L^\perp$ the tangent and normal bundles of   $\mathcal F$,
and that gives the decomposition of the tangent bundle $TM= L \oplus
L^{\perp}.$ Let $\Delta_g$ be the  Laplace operator associated to
$g$ and let $\nabla$ be the Bott connection of the normal bundle
$Q=TM\,/L$ (see \cite[pp.~20-21]{ton}). The Jacobi operator
${\mathcal J}_{\nabla}$ of $\mathcal F$, defined by ${\mathcal
J}_{\nabla}s =(d^*_{\nabla}d_{\nabla}-\rho_{\nabla})s$ for any $s$
section of the normal bundle,
 is a second order elliptic operator (see \cite{To}).
The compactness of $M$ implies that the spectra of  $\Delta_g$
and ${\mathcal J}_{\nabla}$ are discrete. Using Gilkey's theory (\cite{Gi,Rosenberg}),
 one can write their associated asymptotic expansions:
$$ Tr\,e^{-t\Delta_g}
  = \sum\limits_{i=1}^{\infty} e^{-t\lambda_i}\quad {}_{\widetilde {t \searrow 0}}
  \quad (4\pi t)^{-\frac{m}{2}}\sum\limits_{s=0}^{\infty} t^s a_s(\Delta_g)$$
$$ Tr\,e^{-t{\mathcal J}_{\nabla}}
  = \sum\limits_{i=1}^{\infty} e^{-t\mu_i}\quad {}_{\widetilde {t \searrow 0}}
  \quad (4\pi t)^{-\frac{m}{2}}\sum\limits_{s=0}^{\infty} t^s b_s({\mathcal J}_{\nabla})$$
where
$$a_s(\Delta_g)=\int_M a_s(x,\Delta_g)dv_g$$
$$b_s({\mathcal J}_{\nabla}) = \int_M b_s (X, {\mathcal J}_{\nabla})dv_g$$
are invariants of $\Delta_g$ and ${\mathcal J}_{\nabla}$  depending  only on their corresponding discrete spectra
$$Spec(M,g)=\{0\leq \lambda_1 \leq \lambda_2 \leq \ldots \leq \lambda_i \leq \ldots \uparrow \infty \}$$
$$Spec({\mathcal F}, {\mathcal J}_{\nabla})=\{\mu_1 \leq \mu_2 \leq \ldots \leq \mu_i \leq \ldots \uparrow \infty \}$$
We restrict our attention to the first coefficients  $a_s$ and $b_s$ for $s \in \{0,1,2\}$
which encodes certain properties of the spectral geometry of
 $(M,\mathcal F)$. We recall the following theorem from  \cite{Gi, NiToVa}.
\begin{theorem}\label{T1}
Let $\mathcal F$ be a Riemannian foliation of  codimension $q \geq 2$ on a compact Riemannian manifold $(M,g)$.
Then
\begin{eqnarray}
\begin{array}{ll}
a_0(\Delta_g)& =  a_0 = Vol_g(M)\cr
\\
a_1(\Delta_g) &= a_1 = \frac{1}{6} \int_M \tau dv_g \cr
\\
a_2(\Delta_g) & = a_2 = \frac{1}{360} \int_M (2 \Vert R \Vert^2 -2 \Vert \rho \Vert^2 + 5 \tau^2)dv_g
\end{array}
\end{eqnarray}
\begin{eqnarray}
\begin{array}{ll}
b_0(\mathcal J _{\nabla})& = b_0 = qVol_g(M)\cr
\\
b_1(\mathcal J _{\nabla}) & =b_1 = q a_1 + \int_M \tau_{\nabla} dv_g \cr
\\
b_2(\mathcal J _{\nabla}) & = b_2
= q a_2 + \frac{1}{12}\int_M (2\tau \tau_{\nabla}+6\Vert \rho_{\nabla}\Vert^2 - \Vert R_{\nabla}\Vert^2)dv_g\,,
\end{array}
\end{eqnarray}
where $R, \rho $ are the Riemann and the Ricci tensor fields, $\tau
$ is the scalar curvature of $g$, and $R_{\nabla}, \rho_{\nabla},
\tau_{\nabla}$ are those associated to the Bott connection
 $\nabla$ of the transverse bundle $Q= TM\,/L$.
\end{theorem}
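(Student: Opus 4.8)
The plan is to recognize that both $\Delta_g$ and $\mathcal J_\nabla$ are operators of \emph{Laplace type} acting on sections of a Riemannian vector bundle over the closed manifold $(M,g)$, and then to read off the first three heat coefficients from Gilkey's universal formulas for such operators. Recall that for $D=-(\operatorname{tr}_g\nabla^V\nabla^V+E)$ on a Riemannian vector bundle $(V,\langle\cdot,\cdot\rangle)$ with a compatible connection $\nabla^V$ of curvature $\Omega^V$ and a self-adjoint endomorphism field $E$, the integrated local invariants, in the normalization of the asymptotic expansions written above and after discarding the divergence terms $\int_M\Delta\tau\,dv_g=0$ and $\int_M\operatorname{tr}(\Delta E)\,dv_g=0$ (which vanish because $M$ is closed), are
\[
a_0(D)=\int_M\operatorname{tr}(\mathrm{Id}_V)\,dv_g,\qquad a_1(D)=\tfrac16\int_M\operatorname{tr}\!\big(\tau\,\mathrm{Id}_V+6E\big)\,dv_g,
\]
\[
a_2(D)=\tfrac1{360}\int_M\operatorname{tr}\!\Big((2\Vert R\Vert^2-2\Vert\rho\Vert^2+5\tau^2)\,\mathrm{Id}_V+60\,\tau E+180\,E^2+30\,\Omega^V_{ij}\Omega^V_{ij}\Big)\,dv_g .
\]

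For $D=\Delta_g$ one takes $V=M\times\mathbb R$ the trivial line bundle, $\nabla^V=d$, $E=0$, $\Omega^V=0$ and $\operatorname{tr}(\mathrm{Id}_V)=1$; substituting gives verbatim the three formulas for $a_0,a_1,a_2$ in the statement.

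For $D=\mathcal J_\nabla$ I would take $V=Q$ with its transverse metric and the metric connection $\nabla$ on $Q$ extending the Bott partial connection (as in \cite{ton}). At form-degree zero $d_\nabla$ is just $\nabla$, so $d_\nabla^*d_\nabla=\nabla^*\nabla=-\operatorname{tr}_g\nabla\nabla$ is precisely the rough Laplacian of that connection (there is no Weitzenb\"ock correction on $0$-forms); hence $\mathcal J_\nabla=d_\nabla^*d_\nabla-\rho_\nabla$ is of Laplace type with bundle curvature $\Omega^V=R_\nabla$ and endomorphism $E=\rho_\nabla$, the transverse Ricci operator — the hypothesis $q\ge2$ ensuring that these are the transverse data appearing in the cited references. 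The fibrewise traces one needs are $\operatorname{tr}(\mathrm{Id}_Q)=q$, $\operatorname{tr}(\rho_\nabla)=\tau_\nabla$, $\operatorname{tr}(\rho_\nabla^2)=\Vert\rho_\nabla\Vert^2$ (since $\rho_\nabla$ is self-adjoint), and $\sum_{i,j}\operatorname{tr}\!\big(R_\nabla(e_i,e_j)\circ R_\nabla(e_i,e_j)\big)=-\Vert R_\nabla\Vert^2$, the minus sign coming from the skew-adjointness of $R_\nabla(e_i,e_j)$ for the metric connection $\nabla$. Substituting into the formulas above and using the already-established expressions for $a_0,a_1,a_2$ to absorb the $q\,\mathrm{Id}_Q$ contributions yields
\[
b_0=q\,\mathrm{Vol}_g(M),\qquad b_1=q\,a_1+\int_M\tau_\nabla\,dv_g,
\]
\[
b_2=q\,a_2+\tfrac1{360}\int_M\!\big(60\,\tau\tau_\nabla+180\,\Vert\rho_\nabla\Vert^2-30\,\Vert R_\nabla\Vert^2\big)\,dv_g=q\,a_2+\tfrac1{12}\int_M\!\big(2\,\tau\tau_\nabla+6\,\Vert\rho_\nabla\Vert^2-\Vert R_\nabla\Vert^2\big)\,dv_g ,
\]
which are exactly the asserted identities.

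The routine part is the substitution; the one step needing real care is putting $\mathcal J_\nabla$ into Gilkey's normal form $D=-(\operatorname{tr}_g\nabla\nabla+E)$ and then keeping all the conventions coherent: the sign of the scalar-curvature term, the minus sign in $\operatorname{tr}(\Omega^V_{ij}\Omega^V_{ij})=-\Vert R_\nabla\Vert^2$, the reduction of the coefficients $\tfrac1{360}(60,180,30)$ to $\tfrac1{12}(2,6,1)$, and checking that the mixed $L\oplus L^\perp$ terms of $d_\nabla^*d_\nabla$ produce no extra first- or zeroth-order contribution. Once these are fixed compatibly with \cite{Gi,NiToVa}, the formulas for $b_0,b_1,b_2$ follow.
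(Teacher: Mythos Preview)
Your proposal is correct and is, in fact, exactly the derivation given in \cite{NiToVa}; the paper itself does not supply a proof of Theorem~\ref{T1} but merely quotes it from \cite{Gi,NiToVa}. Your identification of $\mathcal J_\nabla=\nabla^*\nabla-\rho_\nabla$ as a Laplace-type operator on $Q$ with $E=\rho_\nabla$ and $\Omega^V=R_\nabla$, followed by substitution into Gilkey's universal formulas and the sign bookkeeping $\operatorname{tr}(R_\nabla(e_i,e_j)^2)=-\Vert R_\nabla\Vert^2$, is precisely the argument in the cited source, so there is nothing to add.
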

The following theorem, due to  Nishikawa, Tondeur and Vanhecke \cite{NiToVa},
is fundamental for the spectral geometry of a Riemannian foliation.
\begin{theorem}\label{T2} Let $(M,g)$ and $(M_0,g_0)$ be
two compact  Riemannian manifolds endowed with
 Riemannian foliations  $\mathcal F$ and  ${\mathcal  F}_0$
of codimensions $q$ and $q_0$, respectively. If $\mathcal F$ and  ${\mathcal  F}_0$
are isospectral, that is
$$Spec(M,g) = Spec(M_0,g_0), \qquad Spec(\mathcal F, {\mathcal J}_{\nabla})
=Spec({\mathcal F}_0,{\mathcal J}_{\nabla_0}),$$
then the following hold:
\begin{itemize}
\item[\rm i)] $\dim\,M = \dim\,M_0\,, \quad \mathrm{Vol}(M)=\mathrm{Vol}(M_0)\,, \quad q=q_0$,
\item[\rm ii)] $\int_M \tau dv_g = \int_{M_0} \tau_0 dv_{g_0}\,, \quad \int_M \tau_{\nabla} dv_g
= \int_{M_0} \tau_{\nabla_0} dv_{g_0}$,
\item[\rm iii)] $\int_M (2 \Vert R \Vert^2 -2 \Vert \rho \Vert^2 + 5 \tau^2)dv_g =
\int_{M_0} (2 \Vert R_0 \Vert^2 -2 \Vert \rho_0 \Vert^2 + 5 \tau_0^2)dv_{g_0}$,
\item[\rm iv)] $
\int_M (2\tau \tau_{\nabla}+6\Vert \rho_{\nabla}\Vert^2 - \Vert R_{\nabla}\Vert^2)dv_g =
\int_{M_0} (2\tau_0 \tau_{\nabla_0}+6\Vert \rho_{\nabla_0}\Vert^2 - \Vert R_{\nabla_0}\Vert^2)dv_{g_0}$.
\end{itemize}
\end{theorem}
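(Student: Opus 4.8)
The plan is to use the fact that the heat-trace coefficients $a_s(\Delta_g)$ and $b_s(\mathcal J_\nabla)$ are spectral invariants: once the relevant spectra coincide, the individual coefficients coincide, and then Theorem~\ref{T1} converts these coefficient equalities into the asserted integral identities. So the proof is essentially a bookkeeping argument built on top of the already-quoted Theorem~\ref{T1}.

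First I would record that $Spec(M,g)=Spec(M_0,g_0)$ means $Tr\,e^{-t\Delta_g}=Tr\,e^{-t\Delta_{g_0}}$ as functions of $t>0$, since each trace is the sum of $e^{-t\lambda}$ over the respective spectrum. Writing out the two small-time asymptotic expansions from the Preliminaries, the one attached to $M$ lives on the scale $\{t^{s-m/2}\}_{s\ge 0}$ with most singular coefficient $(4\pi)^{-m/2}a_0=(4\pi)^{-m/2}\mathrm{Vol}_g(M)>0$, and likewise for $M_0$ with $m_0$ in place of $m$. Since the asymptotic expansion of a fixed function is unique and both of these leading coefficients are strictly positive, the most singular powers of $t$ on the two sides must agree, forcing $m=m_0$; then equating the coefficient of $t^{-m/2}$ gives $\mathrm{Vol}_g(M)=\mathrm{Vol}_{g_0}(M_0)$, and equating the coefficients of $t^{1-m/2}$ and of $t^{2-m/2}$ gives $a_1(\Delta_g)=a_1(\Delta_{g_0})$ and $a_2(\Delta_g)=a_2(\Delta_{g_0})$. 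Substituting the formulas of Theorem~\ref{T1} turns these last two into the first identity of ii) and into identity iii).

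Next I would run the same argument for the transverse Jacobi operators: $Spec(\mathcal F,\mathcal J_\nabla)=Spec(\mathcal F_0,\mathcal J_{\nabla_0})$ gives $Tr\,e^{-t\mathcal J_\nabla}=Tr\,e^{-t\mathcal J_{\nabla_0}}$, whose expansion has most singular coefficient $(4\pi)^{-m/2}b_0=(4\pi)^{-m/2}q\,\mathrm{Vol}_g(M)$. Since $m=m_0$ and $\mathrm{Vol}_g(M)=\mathrm{Vol}_{g_0}(M_0)>0$ are already known, comparing leading coefficients yields $q=q_0$, which finishes i). Comparing the next two coefficients gives $b_1(\mathcal J_\nabla)=b_1(\mathcal J_{\nabla_0})$ and $b_2(\mathcal J_\nabla)=b_2(\mathcal J_{\nabla_0})$; in the formulas of Theorem~\ref{T1} for $b_1$ and $b_2$ the terms $q\,a_1$ and $q\,a_2$ now cancel, using $q=q_0$ together with the already established $a_1$- and $a_2$-equalities, and what survives is precisely the second identity of ii) and identity iv).

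I do not expect a genuine obstacle here: the substance lies entirely inside the already-quoted Theorem~\ref{T1} (the transverse Gilkey invariants) together with the standard principle that the heat trace, hence every coefficient in its small-$t$ expansion, depends only on the spectrum. The one place that needs a little care is the bookkeeping around the prefactor $(4\pi t)^{-m/2}$: one must extract $m=m_0$ from the match of the most singular terms \emph{before} it is legitimate to equate coefficients of like powers of $t$ across the two manifolds, after which positivity of the volume, and then of $q$, makes each remaining step automatic. One should also keep in mind the standing hypothesis $q\ge 2$, $q_0\ge 2$, under which the formulas of Theorem~\ref{T1} are valid.
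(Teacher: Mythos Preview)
Your argument is correct and is exactly the standard one: equality of spectra forces equality of heat traces, uniqueness of the small-$t$ asymptotic expansion then matches dimensions and all coefficients $a_s$, $b_s$, and Theorem~\ref{T1} translates these into the claimed integral identities. Note, however, that the paper does not actually prove Theorem~\ref{T2}; it is quoted as a result of Nishikawa, Tondeur and Vanhecke \cite{NiToVa} and used as input for the rest of the paper. Your write-up is essentially the argument one finds in that reference, so there is nothing to compare beyond observing that you have reconstructed the cited proof correctly, including the order-of-operations point that $m=m_0$ must be read off first (from positivity of $a_0$) before one may equate the remaining coefficients.
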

We recall that in the one-codimension case the isospectral Riemannian foliations are completely
determined by the spectrum of $\Delta_g$ and for this reason we shall assume throughout the paper
that the codimension $q\geq 2$.
\section{Spectral Invariants of a Riemannian Legendre foliation}

In this section we compute the spectral invariants $a_s$, $b_s$, for $s\in\{0,1,2\}$ of a
 Riemannian Legendre foliation with minimal leaves on a Sasakian manifold $M$ of constant
$\varphi$-sectional curvature
and then we obtain certain geometric properties of two such isospectral Riemannian foliations.
 First, we recall the notion of Riemannian Legendre foliation.

\begin{df} Let $M$ be a  $(2n+1)$-dimensional compact manifold endowed with a  Sasakian
structure $(\varphi, \xi, \eta, g)$
 and let  $\mathcal D = Ker\,\eta = Im\,\varphi$ be the $2n$-dimensional
distribution on $M$ orthogonal to
 the $1$-dimensional  distribution  generated by $\xi$.
 A Riemannian foliation $\mathcal L$ on $M$ is said to be a
{\bf Riemannian Legendre foliation} if the leaves are
$n$-dimensional and $L_x \subset {\mathcal D}_x$, for each $x \in
M$. Note that $\varphi(L)\subset L^{\perp} $.
\end{df}

Let $(M,\varphi, \xi,\eta,g)$ be a Sasakian manifold of constant
$\varphi$-sectional curvature $c$ and of dimension $2n+1 \geq 5$. We
recall that $\varphi$ is an endomorphism of tangent bundle, $\xi$ is
a vector field on $M$, $\eta$ is the 1-form dual to $\xi$ with
respect to $g$, satisfying:
$$\varphi^2=-I+\eta\otimes\xi,\ \  \eta(\xi)=1, \ \ \varphi(\xi)=0,
\  \ \ \varphi(\eta(X))=0,$$
$$g(X,Y)=g(\varphi (X), \varphi (Y))+\eta(X)\eta(Y),$$
$$
\nabla^M_X \xi =-\varphi(X), \ \ \ \
(\nabla^M_X\varphi)(Y)=g(X,Y)\xi-\eta(Y)X,$$
 for any vector fields $X$, $Y$. By \cite{Bl},
its curvature tensor is given by
\begin{eqnarray}
\begin{array}{ll}
R(X,Y)Z =& \frac{c+3}{4}\{g(Y,Z)X-g(X,Z)Y\}\cr
\\
& + \frac{c-1}{4}\{g(Z,\varphi Y)\varphi X -g(Z,\varphi X)\varphi Y\cr
\\
& +2g(X, \varphi Y)\varphi Z -g(Y,Z)\eta(X) \xi + g(X,Z)\eta(Y)\xi \cr
\\
& -\eta(Y)\eta(Z)X + \eta(X)\eta(Z)Y\}\,,
\end{array}
\end{eqnarray}
and its Ricci tensor and scalar curvature satisfy
\begin{equation}
\rho(X,Y)= \frac{n(c+3)+c-1}{2} g(X,Y) - \frac{(n+1)(c-1)}{2} \eta(X) \eta(Y)\,,
\end{equation}
\begin{equation}\label{e:tau2.3}
\tau =\frac{n}{2}(2n+1)(c+3)+ \frac{n}{2}(c-1)\,.
\end{equation}
Let $\mathcal L$ be a
 Riemannian Legendre foliation on $M$ and $(e_i, \varphi e_i, \xi)$, $ i \in \{1,\ldots, n\}$
 be a local orthonormal basis of $TM$ adapted to the foliation $\mathcal L$, which means that
 $(e_1 ,\ldots , e_n)$ is a local basis of $L$ and $(\varphi e_1,\ldots \varphi e_n)$ is a basis of  $\varphi (L)$.
The curvature tensor writes as
\begin{eqnarray}\label{curvature}
\begin{array}{ll}
R(e_i,\xi,e_k,\xi)=R(\varphi e_i,\xi, \varphi e_k ,\xi)= \delta_{ik}, \cr
\\
R(e_i,e_j, e_k,e_m)= R(\varphi e_i,\varphi e_j,\varphi e_k,\varphi e_m)
=\frac{c+3}{4}(\delta_{ik}\delta_{jm}-\delta_{im}\delta_{jk}),\cr
\\
R(e_i,e_j,\varphi e_k,\varphi e_m)= R(\varphi e_i,\varphi e_j, e_k,
e_m)=\frac{c-1}{4}(\delta_{ik}\delta_{jm}-\delta_{im}\delta_{jk}),\cr
\\
R(e_i,\varphi e_j,e_k,\varphi e_m)=\frac{c+3}{4}\delta_{ik}\delta_{jm}
+ \frac{c-1}{4}\delta_{im}\delta_{jk} + \frac{c-1}{2}\delta_{ij}\delta_{km},
\end{array}
\end{eqnarray}
the other expressions being equal to zero.
The Ricci tensor is given by
\begin{eqnarray}\label{ric}
\begin{array}{ll}
\rho(e_i,\xi)= \rho(\varphi e_i,\xi)= \rho(e_i,\varphi e_j)=0, \cr
\\
\rho(\xi,\xi)=2n,\cr
\\
\rho(e_i,e_j)= \rho(\varphi e_i,\varphi e_j)=\frac{n(c+3)+ c-1}{2}\delta_{ij}.
\end{array}
\end{eqnarray}
The square of the Hilbert-Schmidt norm of $R$, defined to be
\begin{equation}
\Vert R \Vert^2 = \sum_{a,b,c,d} g(R(e_a,e_b) e_c, e_d)g(R(e_a,e_b)e_c,e_d),
\end{equation}
in any orthonormal basis,
writes, in the fixed adapted basis, as
\begin{eqnarray}
\begin{array}{ll}
\Vert R \Vert^2 &= 2[(\frac{c+3}{4})^2
+(\frac{c-1}{4})^2]\sum_{i,j,k,m}
(\delta_{ik}\delta_{jm}-\delta_{im}\delta_{jk})^2 +
8\sum_{ij}\delta_{ij} \cr
\\
& \quad + 4 \sum_{i,j,k,m} (\frac{c+3}{4}\delta_{ik}\delta_{jm} + \frac{c-1}{4}\delta_{im}\delta_{jk}
+ \frac{c-1}{2}\delta_{ij}\delta_{km})^2\cr
\\
& = [\frac{(c+3)^2}{8}+\frac{(c-1)^2}{8}](2n^2 -2n)+8n +
4\{\frac{(c+3)^2}{16}n^2 + \frac{(c-1)^2}{16}n^2 \cr
\\
& \quad + \frac{(c-1)^2}{4}n^2 + 2\frac{c+3}{4}(\frac{c-1}{4}n +\frac{c-1}{2} n)+ \frac{(c-1)^2}{4}n\}\cr
\\
&= \frac{[(c-1)^2+(c+3)^2]n(n-1)}{4} + \frac{(c+3)^2}{4}n^2 +
\frac{(c-1)^2}{4}(5n^2 + 4n)+ \frac{3(c+3)(c-1)n}{2}+8n \cr
\\
& =\frac{(c+3)^2n(2n-1)}{4} + \frac{(c-1)^2n(6n+3)}{4} +
\frac{3(c+3)(c-1)n}{2} + 8n.
\end{array}
\end{eqnarray}
Furthermore, for the norm of the Ricci tensor, computing
$$\Vert \rho \Vert^2 = \sum_{a,b} \rho(e_a, e_b) \rho (e_a, e_b),$$
in the adapted basis, one has
\begin{equation}
\Vert \rho \Vert^2 = 2 \left(\frac{n(c+3)+c-1}{2}\right)^2 n +4n^2\,.
\end{equation}
Summarizing the above computations, by Theorem \ref{T1} we get the following proposition.
\begin{prop}\label{ab}
If $\mathcal L$ is a Riemannian Legendre foliation on a compact $(2n+1)$-dimensional
Sasakian manifold of constant $\varphi$-sectional curvature $c$, then the spectral invariants satisfy:
\begin{eqnarray}\label{a}
a_0(\Delta_g)& =&  a_0 = Vol_g(M)
\\
a_1(\Delta_g) &=& a_1 = \frac{n((2n+1)(c+3)+c -1)}{12} Vol_g(M)
\\
a_2(\Delta_g) & =& a_2 =
 \frac{n}{1440} \Big(64 - 32 n
  +(c+3)^2(-2 + 9n + 16 n^2 + 20 n^3 )\\
 &&  +(c+3)(c-1)(12 + 2 n + 20 n^2)
   + (c-1)^2(2 + 17 n)\Big) Vol_g(M)
\nonumber
\\
b_0(\mathcal J _{\nabla})& =&b_0 = (n+1)Vol_g(M)
\\
b_1(\mathcal J _{\nabla}) & =&b_1 = (n+1) a_1 + \int_M \tau_{\nabla} dv_g
\\
b_2(\mathcal J _{\nabla}) & = &b_2 = (n+1) a_2 +
 \frac{1}{12}\int_M (2\tau \tau_{\nabla}+6\Vert \rho_{\nabla}\Vert^2 -
 \Vert R_{\nabla}\Vert^2)dv_g\,.\label{b}
\end{eqnarray}
\end{prop}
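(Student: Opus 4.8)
The plan is to obtain Proposition \ref{ab} as a direct specialization of Theorem \ref{T1}, using two elementary observations. First, a Riemannian Legendre foliation on a $(2n+1)$-dimensional manifold has $n$-dimensional leaves, hence codimension $q=(2n+1)-n=n+1$; this is what turns the universal constant $q$ of Theorem \ref{T1} into $n+1$ in the formulas for $b_0,b_1,b_2$. Second, on a Sasakian manifold of constant $\varphi$-sectional curvature $c$ the functions $\tau$ (from \eqref{e:tau2.3}), $\Vert R\Vert^2$ and $\Vert\rho\Vert^2$ are constant on $M$, since the explicit expressions obtained above in the adapted frame $(e_i,\varphi e_i,\xi)$ out of \eqref{curvature} and \eqref{ric} depend only on $c$ and $n$; consequently every integral $\int_M(\cdot)\,dv_g$ in the $a_s$-part of Theorem \ref{T1} collapses to the integrand times $Vol_g(M)$.

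Granting this, $a_0=Vol_g(M)$ and $b_0=q\,Vol_g(M)=(n+1)Vol_g(M)$ are immediate, and substituting $q=n+1$ into $b_1=qa_1+\int_M\tau_\nabla\,dv_g$ and $b_2=qa_2+\tfrac1{12}\int_M(2\tau\tau_\nabla+6\Vert\rho_\nabla\Vert^2-\Vert R_\nabla\Vert^2)\,dv_g$ gives exactly the asserted formulas for $b_1$ and $b_2$; the Bott-connection invariants $\tau_\nabla,\rho_\nabla,R_\nabla$ are not determined by $c$ and $n$ alone (they depend on the chosen Legendre foliation), so it is appropriate to leave them in integral form. For $a_1$ one inserts \eqref{e:tau2.3} into $a_1=\tfrac16\int_M\tau\,dv_g=\tfrac16\tau\,Vol_g(M)$ and factors out $\tfrac n2$, which yields $a_1=\tfrac n{12}\big((2n+1)(c+3)+c-1\big)Vol_g(M)$.

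The only step with real content is $a_2$. Here one feeds the closed forms of $\Vert R\Vert^2$, $\Vert\rho\Vert^2$ and $\tau^2$ into $a_2=\tfrac1{360}\big(2\Vert R\Vert^2-2\Vert\rho\Vert^2+5\tau^2\big)Vol_g(M)$ and verifies the polynomial identity
\begin{multline*}
2\Vert R\Vert^2-2\Vert\rho\Vert^2+5\tau^2 = \frac n4\Big(64-32n+(c+3)^2(-2+9n+16n^2+20n^3) \\
{}+(c+3)(c-1)(12+2n+20n^2)+(c-1)^2(2+17n)\Big),
\end{multline*}
after which multiplication by $\tfrac1{360}$ produces the coefficient $\tfrac n{1440}$ in the statement. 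I expect this verification to be the main, though entirely routine, obstacle: one must expand the squares in $\Vert R\Vert^2$, keeping the $(c+3)^2$, $(c+3)(c-1)$, $(c-1)^2$ and monomial-free contributions separate (the last of these containing the $8n$ from the $R(e_i,\xi,e_k,\xi)$-type components), expand $\tau^2=\tfrac{n^2}4\big((2n+1)(c+3)+(c-1)\big)^2$ — which feeds all three curvature monomials with coefficients quadratic in $n$ — subtract $2\Vert\rho\Vert^2$, and then collect within each monomial by powers of $n$; as a partial check, the monomial-free part reduces to $16n-8n^2=\tfrac n4(64-32n)$, and the $(c+3)^2$, $(c+3)(c-1)$, $(c-1)^2$ parts reduce similarly to the coefficients displayed above.
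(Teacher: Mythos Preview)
Your proposal is correct and follows precisely the paper's approach: the paper derives Proposition~\ref{ab} by computing $\tau$, $\Vert R\Vert^2$ and $\Vert\rho\Vert^2$ in the adapted frame from \eqref{curvature} and \eqref{ric}, observing they are constants in $c$ and $n$, and then substituting into Theorem~\ref{T1} with $q=n+1$. The only ``proof'' the paper gives is the sentence ``Summarizing the above computations, by Theorem~\ref{T1} we get the following proposition,'' so your write-up is, if anything, more explicit than the original.
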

Since the foliation $\mathcal L$ is assumed to be Riemannian, there
exists, locally, a Riemannian submersion whose vertical and
horizontal distributions are $L$ and $L^{\perp} = \varphi(L) \oplus
[\xi]$, respectively. Let $A$ and $T$ be the O'Neill tensors (see
\cite{FIP}, \cite[p.~49]{ton}).
\begin{prop}\label{minimale}
Let $\mathcal L$ be a Riemannian Legendre minimal foliation on a Sasakian manifold with constant
$\varphi$-sectional curvature $c$. Then
\begin{itemize}
\item[\rm a)] $\tau_{\nabla}=3\Vert A \Vert^2 + \frac{n}{4}((c+3)(n-1)+8)$;
\item[\rm b)] $\Vert A \Vert^2=\Vert T \Vert^2+ n(c+1)$.
\end{itemize}
\end{prop}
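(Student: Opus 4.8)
The plan is to work with the (local) Riemannian submersion representing $\mathcal L$, whose vertical distribution is $L$ and whose horizontal distribution is $Q=L^{\perp}=\varphi(L)\oplus[\xi]$, and to feed the explicit curvature tensor \eqref{curvature} and the Sasakian structure equations into the O'Neill formulas, using minimality of the leaves to kill the mean-curvature terms.

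For a) I would begin from the standard identity for the transverse scalar curvature of the Bott connection of a Riemannian foliation, which in the minimal case reads $\tau_{\nabla}=\bar\tau+3\Vert A\Vert^2$, where $\bar\tau=\sum_{\alpha,\beta}R(X_{\alpha},X_{\beta},X_{\alpha},X_{\beta})$ is the sum of the ambient sectional curvatures of the $2$-plane sections contained in $Q$ (this is the trace, over an orthonormal frame $(X_{\alpha})$ of $Q$, of the O'Neill relation between the curvature of the local base and the curvature of $M$; its general form carries an extra term built from the mean curvature of the leaves, which here is zero). Everything then reduces to computing $\bar\tau$ from \eqref{curvature} in the adapted frame $(\varphi e_{1},\dots,\varphi e_{n},\xi)$: the planes spanned by $\varphi e_{i},\varphi e_{j}$ contribute $\tfrac{c+3}{4}$ each, hence $\tfrac{c+3}{4}n(n-1)$ altogether, and the planes spanned by $\varphi e_{i},\xi$ contribute $1$ each, hence $2n$, so $\bar\tau=\tfrac{c+3}{4}n(n-1)+2n=\tfrac{n}{4}\big((c+3)(n-1)+8\big)$, which is a).

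For b) I would combine the Gauss equation for the leaves with the O'Neill scalar-curvature identity for the total space. The leaves are $n$-dimensional integral submanifolds of $\mathcal D$, anti-invariant under $\varphi$, and their second fundamental form is the restriction of $T$ to $L\times L$; since \eqref{curvature} shows that every $2$-plane in $L$ has ambient sectional curvature $\tfrac{c+3}{4}$, tracing the Gauss equation and using minimality ($\sum_{i}T_{e_{i}}e_{i}=0$) gives $\tau_{\mathrm{leaf}}=\tfrac{c+3}{4}n(n-1)-\Vert T\Vert^2$ for the intrinsic scalar curvature of a leaf. On the other hand, for a minimal Riemannian submersion the O'Neill total-space formula reduces to $\tau=\tau_{\nabla}+\tau_{\mathrm{leaf}}-\Vert A\Vert^2-\Vert T\Vert^2$. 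Substituting this value of $\tau_{\mathrm{leaf}}$, the formula from a), and the explicit scalar curvature $\tau=\tfrac{n}{2}(2n+1)(c+3)+\tfrac{n}{2}(c-1)$ of \eqref{e:tau2.3}, one solves for $\Vert A\Vert^2-\Vert T\Vert^2$ and simplifies to obtain b). A more hands-on alternative is to evaluate $A$ and $T$ directly on the adapted frame: $\nabla^{M}_{X}\xi=-\varphi X$ together with $\varphi^{2}=-I+\eta\otimes\xi$ gives $A_{\varphi e_{i}}\xi=\mathcal V\,\nabla^{M}_{\varphi e_{i}}\xi=e_{i}$, while $(\nabla^{M}_{X}\varphi)Y=g(X,Y)\xi-\eta(Y)X$ ties the components of $A$ along $\varphi(L)$ to those of $T$ along $L$ through $\varphi$, and summing the squared norms over the frame yields the stated relation.

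The only genuinely delicate point, as opposed to routine curvature bookkeeping and index-chasing with the O'Neill tensors, is checking that every term involving the mean curvature of the leaves is annihilated by minimality, so that all of the identities above hold pointwise and not merely after integration over $M$.
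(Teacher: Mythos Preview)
Your argument for a) is exactly the paper's: trace the O'Neill identity $K_{\nabla}(X,Y)=K(X,Y)+3\Vert A_{X}Y\Vert^{2}$ over the horizontal frame $(\varphi e_{1},\dots,\varphi e_{n},\xi)$ and read off the sectional curvatures from \eqref{curvature}. One small inaccuracy: that identity, and hence the formula $\tau_{\nabla}=\bar\tau+3\Vert A\Vert^{2}$, holds for any Riemannian submersion and does not require minimality of the leaves; the mean-curvature terms enter only in b).

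For b) your route diverges from the paper's. The paper computes the mixed scalar curvature $\tau^{mixed}=\sum_{i,\alpha}R(e_{i},f_{\alpha},e_{i},f_{\alpha})$ directly from \eqref{curvature} and then invokes Ranjan's formula $\tau^{mixed}=\mathrm{div}(H)+\Vert H\Vert^{2}+\Vert A\Vert^{2}-\Vert T\Vert^{2}$, which for $H=0$ gives $\Vert A\Vert^{2}-\Vert T\Vert^{2}=\tau^{mixed}$ in one stroke. Your approach instead combines the Gauss equation for the leaves with the O'Neill total-scalar-curvature identity $\tau=\tau_{\nabla}+\tau_{\mathrm{leaf}}-\Vert A\Vert^{2}-\Vert T\Vert^{2}$; if you unwind it, this is nothing but a re-derivation of Ranjan's relation in the minimal case, since decomposing $\tau=\hat\tau_{M}+\bar\tau_{M}+2\tau^{mixed}$ and substituting $\tau_{\nabla}=\bar\tau_{M}+3\Vert A\Vert^{2}$, $\tau_{\mathrm{leaf}}=\hat\tau_{M}-\Vert T\Vert^{2}$ collapses your identity to $\tau^{mixed}=\Vert A\Vert^{2}-\Vert T\Vert^{2}$. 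So the two arguments are logically equivalent, but the paper's is shorter and isolates the relevant curvature sum (the mixed one) from the outset, whereas yours routes through $\tau$ and $\tau_{\mathrm{leaf}}$ only to have those contributions cancel. Your ``hands-on alternative'' via $A_{\varphi e_{i}}\xi=e_{i}$ and the structure equation for $\nabla^{M}\varphi$ is also viable and is in fact used later in the paper when computing $\Vert\rho_{\nabla}\Vert^{2}$.
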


\begin{proof} Let $X,Y \in \Gamma (\varphi(L) \oplus [\xi])$. From the theory of Riemannian
submersions \cite{FIP} we have
\begin{equation}
K(X,Y)=K_{\nabla}(X,Y) -3 \Vert A_X Y \Vert^2,
\end{equation}
where $\nabla$ is the connection associated to $TM\,/L \simeq \varphi(L) \oplus [\xi]$,
which in the case of a  Riemannian foliation coincides with the connection induced
by the Levi-Civita connection of $M$ on the horizontal distribution.\\
Fixing an adapted local orthonormal basis of the foliation, by \cite{To}, we see
the transverse scalar curvature can be written as
\begin{equation}\label{taunabla}
\begin{array}{ll}
\tau_{\nabla} & = \sum\limits_{i \neq j =1}^n K(\varphi e_i,\varphi e_j)
+ 2\sum\limits_{i=1}^n K(\varphi e_i, \xi) +3 \Vert A \Vert^2 \cr
\\
& = \sum\limits_{i \neq j =1}^n R(\varphi e _i, \varphi e_j, \varphi e_i, \varphi e_j)
+ 2 \sum\limits_{i=1}^n R(\varphi e_i,\xi,\varphi e_i, \xi) + 3 \Vert A \Vert^2\cr
\\
& = \frac{c+3}{4}n(n-1) +2n +3\Vert A \Vert^2\,,
\end{array}
\end{equation}
which implies a).\\
Denoting by $\tau^{mixed}$ the mixed scalar curvature, defined by
\begin{equation}
\tau^{mixed}= \sum\limits_{i=1}^n \sum\limits_{j=n+1}^{2n}R(e_i,f_j,e_i,f_j)
 + \sum\limits_{i=1}^n R(e_i,\xi,e_i,\xi)\,,
\end{equation}
where $f_{n+i}=\varphi e_i$, we obtain:
\begin{equation}\label{mista}
\tau^{mixed} = \sum_{i,j}c\delta_{ij} +n = (c+1)n\,.
\end{equation}
We denote by $H$ the mean curvature of the leaves. We recall
Ranjan's formula (\cite{Ra})
\begin{equation}\label{ranjan}
\tau^{mixed} = div(H) +\Vert H \Vert^2 + \Vert A \Vert^2- \Vert T
\Vert^2\,.
\end{equation}
Specializing to the case $H=0$, the relations (\ref{ranjan}) and \eqref{mista} simply imply b).
\end{proof}
\begin{theorem}\label{t:2.3} Let $\mathcal L$ and  ${\mathcal L}_0$ be two
Riemannian Legendre minimal foliations on compact Sasakian manifolds
$(M,\varphi,\xi,\eta,g)$ and $(M_0,\varphi_0,\xi_0,\eta_0,g_0)$.
 If $(M,g)$ and $(M,g_0)$ have constant $\varphi$, and $\varphi_0$-sectional curvature $c$ and $c_0$, and
if $\mathcal L$ and ${\mathcal L}_0$ are isospectral, then
\begin{itemize}
\item[\rm a)] $\dim M=\dim M_0$, $Vol(M) = Vol(M_0)$, $c=c_0$,
\item[\rm b)] $\int_M \Vert A \Vert^2 dv_g = \int_{M_0} \Vert A_0 \Vert^2 dv_{g_0}$, and
\item[\rm c)] $\int_M \Vert T \Vert^2 dv_g = \int_{M_0} \Vert T_0 \Vert^2 dv_{g_0}$.
\end{itemize}
\end{theorem}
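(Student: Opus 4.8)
The plan is to extract everything from the integrated heat-invariant identities of Theorem \ref{T2} together with the explicit formulas in Proposition \ref{ab} and Proposition \ref{minimale}; beyond these, only elementary algebra should be required.

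First I would invoke Theorem \ref{T2}(i): it gives $\dim M = \dim M_0$, so writing $\dim M = 2n+1$ and $\dim M_0 = 2n_0+1$ one gets $n = n_0$, and it also gives $\operatorname{Vol}(M) = \operatorname{Vol}(M_0) =: V > 0$ (the equality of codimensions, $n+1 = n_0+1$, is then automatic). To get $c = c_0$ I would use that on a Sasakian manifold of constant $\varphi$-sectional curvature the scalar curvature in \eqref{e:tau2.3} is a \emph{constant}; hence $\int_M \tau\,dv_g = \tau V$ and $\int_{M_0}\tau_0\,dv_{g_0} = \tau_0 V$, and Theorem \ref{T2}(ii) forces $\tau = \tau_0$. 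Since \eqref{e:tau2.3} writes $\tau$ as $n\big((n+1)c + 3n+1\big)$, which for fixed $n$ is affine in $c$ with nonzero slope, $n = n_0$ yields $c = c_0$. This proves (a); alternatively one could read off the same conclusion from the equality of the invariant $a_1$ in Proposition \ref{ab}.

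For (b) I would use the second identity in Theorem \ref{T2}(ii), namely $\int_M \tau_\nabla\,dv_g = \int_{M_0}\tau_{\nabla_0}\,dv_{g_0}$, and substitute Proposition \ref{minimale}(a). Since $\tau_\nabla = 3\Vert A\Vert^2 + \tfrac{n}{4}\big((c+3)(n-1)+8\big)$ with the second summand constant, and since $n = n_0$, $c = c_0$, $V = V_0$, integrating and cancelling the common constant terms leaves $3\int_M \Vert A\Vert^2 dv_g = 3\int_{M_0}\Vert A_0\Vert^2 dv_{g_0}$, which is (b). Then for (c) I would apply Proposition \ref{minimale}(b), $\Vert A\Vert^2 = \Vert T\Vert^2 + n(c+1)$, pointwise on each manifold, integrate, and subtract; using (b) and $n = n_0$, $c = c_0$, $V = V_0$ gives $\int_M \Vert T\Vert^2 dv_g = \int_{M_0}\Vert T_0\Vert^2 dv_{g_0}$.

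The work is genuinely light here; the only point requiring care — and the reason the two standing hypotheses matter — is that it is the \emph{integrated} curvature data that the spectrum sees, so each scalar one wants to compare ($\tau$, the non-$\Vert A\Vert^2$ part of $\tau_\nabla$, the non-$\Vert T\Vert^2$ part of $\Vert A\Vert^2$) must be a universal constant depending only on $n$ and $c$. Constancy of $\varphi$-sectional curvature makes $\tau$ and the transverse/mixed curvature terms of Proposition \ref{minimale} constant, while minimality of the leaves ($H=0$) is what kills the $\operatorname{div}(H)$ and $\Vert H\Vert^2$ terms in Ranjan's formula \eqref{ranjan}; once $c = c_0$ is established these constants coincide and (b), (c) drop out by cancellation. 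I do not expect to need the second heat coefficients $a_2$, $b_2$ (or the pointwise curvature-norm formulas preceding Proposition \ref{ab}) for this particular statement.
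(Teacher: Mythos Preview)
Your proposal is correct and follows essentially the same route as the paper's proof: invoke Theorem~\ref{T2}(i)--(ii) to obtain $n=n_0$, $\mathrm{Vol}(M)=\mathrm{Vol}(M_0)$ and the equality of the integrals of $\tau$ and $\tau_\nabla$, deduce $c=c_0$ from \eqref{e:tau2.3}, then use Proposition~\ref{minimale}(a) to get (b) and Proposition~\ref{minimale}(b) to get (c). Your write-up is in fact more explicit than the paper's, which compresses the last two steps into a single sentence (with what appears to be a labeling slip).
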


\begin{proof}
By Theorem \ref{T2}i) and ii), we see that $n=n_0$ and $\mathrm{Vol}(g)=\mathrm{Vol}(g_0)$ and
$$ \int_M \tau\, dv_g=\int_{M_0}\tau_0\, dv_{g_0},
 \text{ and } \int_M \tau_{\nabla} dv_g=\int_{M_0}\tau_{0\, \nabla_0}dv_{g_0}.$$
Therefore by \eqref{e:tau2.3}, we get $c=c_0$,
which by Proposition \ref{minimale}a)  simply implies a). Now, by Proposition \ref{minimale}b) we get b) .
\end{proof}
\begin{cor} Under the hypotheses of Theorem \ref{t:2.3},
the following statements hold:
\begin{itemize}
\item[(a)] If  $\varphi(L) \oplus [\xi] $ is  integrable, then so is $\varphi(L_0) \oplus [\xi]$.\\
\item[(b)] If  $\mathcal L$ is  totally geodesic, then so is  ${\mathcal L}_0$.
\end{itemize}
\end{cor}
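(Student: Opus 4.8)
The plan is to reduce each implication to the pointwise vanishing of one of the two O'Neill tensors, and then to transfer that vanishing to the second foliation by means of the integral identities of Theorem \ref{t:2.3}. First I would recall two standard facts from the theory of Riemannian submersions applied to the local Riemannian submersions associated to a Riemannian foliation: the horizontal distribution $L^{\perp}=\varphi(L)\oplus[\xi]$ is integrable if and only if $A\equiv 0$, and the leaves of $\mathcal L$ are totally geodesic if and only if $T\equiv 0$. Since $\Vert A\Vert^2$ and $\Vert T\Vert^2$ are nonnegative continuous functions on the compact manifold $M$, each of these conditions is in turn equivalent to $\int_M\Vert A\Vert^2 dv_g=0$, respectively $\int_M\Vert T\Vert^2 dv_g=0$.

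For part (a) I would argue as follows. If $\varphi(L)\oplus[\xi]$ is integrable, then $A\equiv 0$ on $M$, hence $\int_M\Vert A\Vert^2 dv_g=0$; Theorem \ref{t:2.3}b) then gives $\int_{M_0}\Vert A_0\Vert^2 dv_{g_0}=0$, so that $A_0\equiv 0$ on $M_0$ by the same compactness argument, which means precisely that $\varphi(L_0)\oplus[\xi_0]$ is integrable. For part (b) the same scheme applies with $T$ in place of $A$: total geodesy of $\mathcal L$ gives $T\equiv 0$, hence $\int_M\Vert T\Vert^2 dv_g=0$, and Theorem \ref{t:2.3}c) yields $\int_{M_0}\Vert T_0\Vert^2 dv_{g_0}=0$, so $T_0\equiv 0$ and $\mathcal L_0$ is totally geodesic.

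I do not expect any genuine obstacle: once Theorem \ref{t:2.3} is in hand the corollary is essentially immediate. The only step requiring a word of justification is the equivalence between the vanishing of the integrals and the pointwise vanishing of the tensors, which rests on the compactness of $M$ and $M_0$ and on the nonnegativity of the integrands, together with the classical characterizations of the integrability of the horizontal distribution in terms of $A$ and of the total geodesy of the leaves in terms of $T$.
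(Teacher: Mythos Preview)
Your proposal is correct and follows exactly the same approach as the paper, which simply observes that $A=0\Rightarrow A_0=0$ and $T=0\Rightarrow T_0=0$; you have merely spelled out the standard justifications (integrability $\Leftrightarrow A\equiv 0$, total geodesy $\Leftrightarrow T\equiv 0$, and the passage from vanishing integral to pointwise vanishing via nonnegativity and compactness) that the paper leaves implicit.
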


\begin{proof} It is sufficient to observe that
$A =0 \Rightarrow A_0 =0$ and that $T =0 \Rightarrow T_0 =0\,.$
\end{proof}
\section{The invariants $b_1$ and $b_2$}
We shall explicitly compute $b_1, b_2$ of Proposition \ref{ab}.
By Theorem \ref{T1} and Proposition \ref{minimale}
 we get
\begin{eqnarray}
\qquad b_1(\mathcal J _{\nabla})  = \frac{n}{12}[(c+3)(2n^2 +6n -2)+3(n+1)(c-1)+2n] + 3\int_M \Vert A \Vert^2 dv_g\, .
\end{eqnarray}

Now, we proceed to the computation of each term involved in
\eqref{b} of $b_2$. Let $(X_i, U_j)$ be an orthonormal basis adapted
to the foliation with $X_i$ horizontal and $U_j$ vertical. We
introduce the notations from \cite{Besse}
\begin{eqnarray}
(A_X,A_Y)&=& \sum_i g(A_X X_i,A_Y X_i)=\sum_j g(A_X U_j,A_Y U_j)\,,\\
 (TX,TY) &=& \sum_j g(T_{U_j}X,T_{U_J}Y).
\end{eqnarray}
Since $\mathcal L$ is assumed to be minimal,  by Proposition 9.36 in \cite{Besse}, we have
\begin{equation}
\rho_{\nabla}(X,Y)=\rho(X,Y)+ 2 (A_X,A_Y)+(TX,TY),
\end{equation}

Setting $f_i=\varphi e_i$, for $i \in \{1,\ldots,n\}$, and $f_{n+1}=\xi$, we have
\begin{equation}
\Vert \rho_{\nabla}\Vert^2 = \sum\limits_{i,j=1}^n \rho_{\nabla} (f_i,f_j)^2 + \rho_{\nabla}(\xi,\xi)^2
\end{equation}
and
we obtain
\begin{eqnarray}
\begin{array}{ll}
\Vert \rho_{\nabla} \Vert^2 & = \sum\limits_{i,j=1}^{n+1} \rho
(f_i,f_j)^2 + 2 \sum\limits_{i,j=1}^{n+1}
\rho(f_i,f_j)[2(A_{f_i},A_{f_j})+(T f_i,T f_j)]\cr
\\
& \quad + \sum\limits_{i,j =1}^{n+1} [2(A_{f_i},A_{f_j})+(Tf_i,Tf_j)]^2\,.
\end{array}
\end{eqnarray}
We easily see that
$$T_{e_i}\xi =0,\quad T_{e_i}\varphi e_j =\varphi(T_{e_i}e_j),
 \quad A_{\varphi e_i} \varphi e_j = \varphi(A_{\varphi e_i} e_j), \quad A_{\xi} \varphi e_i = e_i,$$
$$2(A_{\xi}, A_{\xi})+ (T \xi, T \xi)=2 \sum\limits_{i=1}^n (A_{\xi} \varphi e_i , A_{\xi}\varphi e_i)=2n.$$
We also know from (\ref{ric}) that
$$\rho(\xi,\xi)= 2n$$
$$\sum\limits_{i,j=1}^n \rho(f_i,f_j)^2 =\left[\frac{n(c+3)+c-1}{2}\right]^2 n.$$
Setting
$$l=\sum\limits_{i,j=1}^{n+1} \rho(f_i,f_j)^2,$$
we obtain:
\begin{eqnarray}\label{nablarho}
\begin{array}{ll}
\Vert \rho_{\nabla} \Vert^2 & =
l +2\sum_{i,j}\frac{n(c+3)+c-1}{2}\delta_{ij}\left[2(A_{f_i},A_{f_j})+(T f_i,T f_j)\right]\cr
\\
& \quad +2(2n)^2 + \sum\limits_{i,j=1}^{n+1}\left[2(A_{f_i},A_{f_j})+(Tf_i,T f_j)\right]^2\cr
\\
& =(l+8n^2) + (n(c+3)+c-1)\left[2 \Vert A \Vert^2 + \Vert T \Vert^2\right]\cr
\\
& \quad + \sum\limits_{i,j=1}^{n+1}\left[2(A_{f_i},A_{f_j})+(T f_i,T f_j)\right]^2\,.
\end{array}
\end{eqnarray}

\begin{prop} If  $\mathcal L$ is a  Riemannian foliation on $(M,g)$, then
  \begin{equation}\label{curvatura}
  \sum\limits_{i=1}^n R(X,e_i,Y,e_i)  = \frac{1}{2}(g(\nabla^M_Y H,X)+ g(\nabla^M_X H,Y))
   + (A_X,A_Y)- (TX,TY),
  \end{equation}
 where $\nabla^M$ is the Levi-Civita connection of $(M,g)$, $H$ is the mean curvature of the leaves,
 $(e_1,\ldots,e_n)$ is a local basis of the vertical distribution $L$,
  and  $X,Y$ are horizontal.
\end{prop}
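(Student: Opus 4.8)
The plan is to obtain \eqref{curvatura} from O'Neill's fundamental equations for the local Riemannian submersion $\pi$ whose fibres are the leaves of $\mathcal L$ (equivalently, one may substitute the horizontal/vertical decomposition $\nabla^M_E F=\mathcal H\nabla^M_E F+\mathcal V\nabla^M_E F$, together with the definitions of $A$ and $T$, directly into $R(X,e_i)Y=\nabla^M_X\nabla^M_{e_i}Y-\nabla^M_{e_i}\nabla^M_XY-\nabla^M_{[X,e_i]}Y$ and sum over $i$, which is O'Neill's original computation). Note that \eqref{curvatura} is the ``un-contracted'' form of Ranjan's formula \eqref{ranjan}: contracting \eqref{curvatura} over an orthonormal horizontal frame $X=Y$ and using $\sum_iT_{e_i}e_i=H$ returns \eqref{ranjan}, and this will be used at the end to fix signs. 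Throughout I work at a fixed point of $M$, take $(e_1,\dots,e_n)$ normal there, and write $\mathcal V,\mathcal H$ for the projections onto $L$ and $L^{\perp}$.

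The key steps, in order. First I would apply the O'Neill identity for the mixed curvature: for horizontal $X,Y$ and a vertical vector $U$, $g(R(X,U)Y,U)$ equals a $(\nabla^M T)$-term, possibly a $(\nabla^M A)$-term, plus $g(A_XU,A_YU)-g(T_UX,T_UY)$. Summing over $U=e_i$, the last two terms become $(A_X,A_Y)$ and $-(TX,TY)$ by the very definitions of these pairings. Next I would exploit that the left-hand side is symmetric in $X\leftrightarrow Y$ (because $R(X,U,Y,U)=R(Y,U,X,U)$ by the pair symmetry of $R$): equating both sides and passing to the $(X,Y)$-symmetric part, any $(\nabla^M A)$-contribution drops out, since the trace $\sum_i g\big((\nabla^M_{e_i}A)_XY,e_i\big)$ is antisymmetric in $(X,Y)$ — this follows from $A_XY=-A_YX$ for horizontal $X,Y$ together with the fact that $A$ sends vertical vectors into $L^{\perp}$, so the ``correction'' terms produced when one differentiates the horizontal antisymmetry are horizontal and hence $g$-orthogonal to the $e_i$. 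One is thus left with the $(X,Y)$-symmetrization of $\sum_i g\big((\nabla^M_XT)_{e_i}Y,e_i\big)$.

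Finally I would identify this last trace with the mean-curvature term. Differentiating the skew-symmetry $g(T_UV,W)=-g(V,T_UW)$ shows $(\nabla^M_XT)_U$ is again skew-symmetric, whence $\sum_i g\big((\nabla^M_XT)_{e_i}Y,e_i\big)=-g\big(Y,\sum_i(\nabla^M_XT)_{e_i}e_i\big)$. Since $H=\sum_iT_{e_i}e_i$ and the frame is normal at the point, $\sum_i(\nabla^M_XT)_{e_i}e_i$ equals $\nabla^M_XH$ up to two kinds of terms: the $T_{e_i}(A_Xe_i)$, which are vertical and so killed by pairing with the horizontal $Y$; and terms of the shape $T_{\mathcal V\nabla^M_Xe_i}e_i+T_{e_i}\mathcal V\nabla^M_Xe_i$, which cancel in the sum because $T$ is symmetric on vertical arguments while the vertical connection $1$-form is skew. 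Hence $\sum_i g\big((\nabla^M_XT)_{e_i}Y,e_i\big)=\pm g(\nabla^M_XH,Y)$, and substituting into the symmetrized identity of the previous step gives \eqref{curvatura}. The only genuinely non-formal point — and the one I expect to be the main obstacle — is precisely this last step, combined with the sign bookkeeping needed to match the curvature convention implicit in \eqref{curvature}: once the shape of the right-hand side is established, all signs and the factor $\tfrac12$ are forced by comparison with Ranjan's formula \eqref{ranjan}. Alternatively one can simply quote the pointwise identity from \cite{Ra}.
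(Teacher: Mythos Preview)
Your plan is correct and follows essentially the same route as the paper: start from O'Neill's mixed-curvature identity
\[
R(X,U,Y,U)=g\big((\nabla^M_XT)_UU,Y\big)-g(T_UX,T_UY)+g\big((\nabla^M_UA)_XY,U\big)+g(A_XU,A_YU),
\]
sum over $U=e_i$, and identify the $T$-trace with $g(\nabla^M_XH,Y)$ after showing that the $T_{\mathcal V\nabla^M_Xe_i}e_i+T_{e_i}\mathcal V\nabla^M_Xe_i$ terms cancel (the paper does exactly this via the skew-symmetry $h_{ij}=-h_{ji}$ of the vertical connection $1$-form).

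The one real difference is the handling of the $A$-trace. The paper does \emph{not} symmetrize: it invokes the explicit identity
\[
\sum_i g\big((\nabla^M_{e_i}A)_XY,e_i\big)=\tfrac12\big(g(\nabla^M_YH,X)-g(\nabla^M_XH,Y)\big)
\]
and then adds it to $g(\nabla^M_XH,Y)$ to obtain the symmetric mean-curvature term directly, with no sign-fixing at the end. Your route---observe that this trace is antisymmetric in $(X,Y)$ and pass to the symmetric part---gives the same conclusion and has the advantage of being self-contained (you argue the antisymmetry from $A_XY=-A_YX$ rather than quoting the displayed identity), at the cost of leaving the signs to be pinned down by comparison with Ranjan's formula. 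Either way works; the paper's version is more explicit, yours is slightly more conceptual.
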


\begin{proof} From the theory of Riemannian submersions, for  any horizontal
vectors $X,Y$ and vertical vector $U$, we have
\begin{eqnarray*}
R(X,U,Y,U) & =&g((\nabla^M_X T)_U U,Y)-g(T_U X,T_U Y)
+ g((\nabla^M_U A)_X Y, U)+g(A_X U, A_Y U).
\end{eqnarray*}
Therefore
\begin{eqnarray*}
\begin{array}{ll}
\sum\limits_{i=1}^n R(X,e_i,Y,e_i)& =\sum\limits_{i=1}^n
(g((\nabla^M_X T)_{e_i}e_i,Y)-g(T_{e_i} X,T_{e_i} Y)\cr
\\
& \quad + g((\nabla^M_{e_i} A)_X Y, e_i)+g(A_X e_i, A_Y e_i)).
\end{array}
\end{eqnarray*}
The covariant derivative of $T$ satisfies
\begin{eqnarray*}
\sum_i g((\nabla^M_X T)_{e_i} e_i,Y) &=& g(\nabla^M_X H,Y) - \sum_i
g(T_{v(\nabla^M_X e_i)} e_i,Y)- \sum_i g(T_{e_i} v (\nabla^M_X
e_i),Y)
\\
& = & g(\nabla^M_X H,Y)-2\sum_i g(T_{e_i} v (\nabla^M_X e_i) , Y)\,,
\end{eqnarray*}
since $T_U W =T_W U$ and $ \sum_i T_{e_i} e_i = H$.\\
Setting $$v(\nabla^M_X e_i) =\sum_j h_{ij} e_j$$ we note that
$$h_{ij}=g(v(\nabla^M_X e_i),e_j)= X(g(e_i,e_j))- g(e_i, \nabla^M_X e_j)=-g(e_i, v(\nabla^M_X e_j))= -h_{ji};$$
$$\sum_{i,j} g(T_{e_i}h_{ij} e_j,Y)= \sum_{i,j}h_{ij}g(T_{e_i}e_j,Y)=0;$$
$$\sum_i g((\nabla^M_{e_i} A)_X Y, e_i)= \frac{1}{2}\left(g(\nabla^M_Y H,X)-g(\nabla^M_X H, Y)\right).$$
Therefore
\begin{eqnarray}\label{curvat}
\begin{array}{ll}
\sum_i R(X,e_i,Y,e_i) & = g(\nabla^M_X H,Y)-(TX,TY)+ (A_X,A_Y)\cr
\\
& \quad + \frac{1}{2}(g(\nabla^M_Y H,X) -g(\nabla^M_X H,Y)),
\end{array}
\end{eqnarray}
and \eqref{curvatura} follows.
\end{proof}

From \eqref{curvat}, one can obtain the following proposition.
\begin{prop}
If $\mathcal L$ is a Riemannian foliation with minimal leaves then
\begin{equation}\label{minimale1}
\sum\limits_{k=1}^n R(f_i,e_k,f_j,e_k)= (A_{f_i},A_{f_j})- (T f_i,T
f_j).
\end{equation}
\end{prop}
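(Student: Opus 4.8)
The plan is to deduce \eqref{minimale1} directly from the identity \eqref{curvat} (equivalently \eqref{curvatura}) established in the preceding proposition, by specializing it to the situation in which the leaves of $\mathcal L$ are minimal.

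First I would recall that the minimality of the leaves of $\mathcal L$ means precisely that the mean curvature vector field $H$ of the leaves vanishes identically on $M$. Since $H\equiv 0$, its covariant derivative with respect to $\nabla^M$ also vanishes, i.e. $\nabla^M_X H=0$ for every vector field $X$. Substituting this into \eqref{curvat} makes the three mean-curvature terms $g(\nabla^M_X H,Y)$, $g(\nabla^M_Y H,X)$ and $g(\nabla^M_X H,Y)$ disappear, so that for any horizontal vectors $X,Y$ one is left with
\[
\sum_{i=1}^n R(X,e_i,Y,e_i)=(A_X,A_Y)-(TX,TY).
\]

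It then remains to take $X=f_i$ and $Y=f_j$. Since $f_k=\varphi e_k$ for $k\in\{1,\dots,n\}$ and $f_{n+1}=\xi$ are all sections of the horizontal distribution $\varphi(L)\oplus[\xi]$, the displayed identity applies verbatim and yields \eqref{minimale1}.

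I do not expect any real obstacle here: the proposition is essentially a one-line corollary of \eqref{curvat}. The only point worth making explicit is that "minimal leaves" is equivalent to $H\equiv 0$, which in turn forces $\nabla^M H\equiv 0$; this is exactly what annihilates the mean-curvature terms and collapses \eqref{curvat} to the asserted formula.
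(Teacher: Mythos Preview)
Your argument is correct and is exactly the approach the paper takes: it simply notes that the proposition follows from \eqref{curvat}, which is precisely what you spell out by setting $H\equiv 0$ (hence all $\nabla^M H$--terms vanish) and then substituting $X=f_i$, $Y=f_j$.
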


Now, we consider a Riemannian Legendre foliation with minimal leaves
on a Sasakian manifold $M$ of constant $\varphi$-sectional curvature
$c$ and we fix a local orthonormal basis $(e_i,f_i,f_{n+1})$ adapted
to the foliation, that is $f_{n+1}=\xi$ and $f_i=\varphi e_i$ for
any $i\in\{1,\ldots,n\}$ and $\{e_1 ,\ldots , e_n\}$ is a local
basis of a leaf $L$.

Setting
$$ S(f_i,f_j) = \sum\limits_{k=1}^n R(f_i,e_k,f_j,e_k)$$
and using relations (\ref{curvature}), we obtain that
$$S(\varphi e_i,\xi)= \sum\limits_{k=1}^n R(\varphi e_i,e_k,\xi,e_k)=0,$$
$$ S(\xi,\xi)=n,\quad S(f_i,f_j)=S(\varphi e_i,\varphi e_j)= d \delta_{ij},$$
where $d= \frac{c+3}{4}n + \frac{3(c-1)}{4}$.\\

By Proposition  \ref{minimale}b) and equations \eqref{nablarho} and
\eqref{minimale1}, it follows:
\begin{eqnarray}
\begin{array}{ll}
\Vert \rho_{\nabla}\Vert^2 & = (l+8n^2)+ [n(c+3)+c-1][3\Vert A \Vert^2 -c(n+1)] \cr
\\
& \quad + \sum\limits_{i,j=1}^{n+1}[3(A_{f_i},A_{f_j})-S(f_i,f_j)]^2.
\end{array}
\end{eqnarray}
Denoting by $E$ the last sum of the previous relation, we have:
\begin{eqnarray}
\begin{array}{ll}
\qquad\qquad E & = \sum\limits_{i=1}^n [3(A_{\varphi e_i},A_{\varphi
e_i})-d]^2 +[2(A_{\xi},A_{\xi})+ (T \xi ,T \xi)]^2
+2\sum\limits_{i<j}^{n+1}[3(A_{f_i},A_{f_j})]^2 \cr
\\
& = 9\sum\limits_{i=1}^n(A_{\varphi e_i},A_{\varphi e_i})^2 + d^2n
-6d\sum\limits_{i=1}^n (A_{\varphi e_i},A_{\varphi e_i}) + 4n^2
+ 18\sum\limits_{i<j}^{n+1} (A_{f_i},A_{f_j})^2.
\end{array}
\end{eqnarray}

Summarizing, we conclude:
\begin{prop} Under the hypothesis of Proposition \ref{ab},
the following holds
\begin{eqnarray}
\Vert\rho_{\nabla}\Vert^2
 &=&  9\sum\limits_{i=1}^n(A_{\varphi
      e_i},A_{\varphi e_i})^2 + 18\sum\limits_{i<j}^{n+1}
      (A_{f_i},A_{f_j})^2
 \\
 && + nd(d+6) -6d\Vert A \Vert^2+ 16n^2 \nonumber
 \\
 &&+\left(n(c+3)+c-1\right)\left(3\Vert A \Vert^2 -c(n+1)\right)\nonumber
 \\
 &&+ n\left(\frac{n(c+3)+c-1}{2}\right)^2, \nonumber
\end{eqnarray}
where $d= \frac{c+3}{4}n + \frac{3(c-1)}{4}$.
\end{prop}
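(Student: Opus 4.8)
The plan is to assemble the asserted identity from the expression for $\Vert\rho_{\nabla}\Vert^2$ displayed just before the definition of $E$, namely
$$\Vert\rho_{\nabla}\Vert^2=(l+8n^2)+\bigl(n(c+3)+c-1\bigr)\bigl(3\Vert A\Vert^2-c(n+1)\bigr)+E,$$
together with the expansion of $E$ already recorded,
$$E=9\sum_{i=1}^n(A_{\varphi e_i},A_{\varphi e_i})^2+d^2n-6d\sum_{i=1}^n(A_{\varphi e_i},A_{\varphi e_i})+4n^2+18\sum_{i<j}^{n+1}(A_{f_i},A_{f_j})^2.$$
Everything else is algebra, so the only two points of genuine content are: evaluating the scalar $l$, and rewriting the linear occurrence of $\sum_{i=1}^n(A_{\varphi e_i},A_{\varphi e_i})$ inside $E$ in terms of $\Vert A\Vert^2$.

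For $l$, I would use \eqref{ric}: in the adapted frame $(f_1,\dots,f_n,f_{n+1})=(\varphi e_1,\dots,\varphi e_n,\xi)$ we have $\rho(\varphi e_i,\varphi e_j)=\tfrac{n(c+3)+c-1}{2}\delta_{ij}$, $\rho(\varphi e_i,\xi)=0$ and $\rho(\xi,\xi)=2n$, whence
$$l=\sum_{i,j=1}^{n+1}\rho(f_i,f_j)^2=n\Bigl(\tfrac{n(c+3)+c-1}{2}\Bigr)^2+(2n)^2=n\Bigl(\tfrac{n(c+3)+c-1}{2}\Bigr)^2+4n^2.$$
For the linear term, note that minimality of the leaves makes $(f_1,\dots,f_{n+1})$ a local orthonormal frame of the transverse distribution $\varphi(L)\oplus[\xi]$, so $\Vert A\Vert^2=\sum_{a=1}^{n+1}(A_{f_a},A_{f_a})$; since $A_{\xi}\varphi e_i=e_i$ gives $(A_{\xi},A_{\xi})=\sum_{i=1}^n g(e_i,e_i)=n$, it follows that $\sum_{i=1}^n(A_{\varphi e_i},A_{\varphi e_i})=\Vert A\Vert^2-n$. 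Substituting this into the expansion of $E$ turns $-6d\sum_i(A_{\varphi e_i},A_{\varphi e_i})+d^2n$ into $-6d\Vert A\Vert^2+6dn+d^2n=-6d\Vert A\Vert^2+nd(d+6)$, while the two quadratic sums and the constant $4n^2$ are untouched.

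Finally, I would substitute the value of $l$ and this rewritten $E$ back into the first display and collect the purely numerical terms, using $l+8n^2+4n^2=n\bigl(\tfrac{n(c+3)+c-1}{2}\bigr)^2+16n^2$; the result is precisely the claimed formula. I do not anticipate a real obstacle here --- the argument is bookkeeping. The only spot needing a little care is the $\xi$-slot: in deriving the explicit $E$ one must use $3(A_{\xi},A_{\xi})-S(\xi,\xi)=3n-n=2n$ for the diagonal entry $i=j=n+1$ (equivalently $2(A_{\xi},A_{\xi})+(T\xi,T\xi)=2n$, with $(T\xi,T\xi)=0$ since $T_{e_i}\xi=0$), so that this entry contributes $4n^2$ rather than a term involving $A$; once that is correct, tracking the remaining $n^2$-constants is all that can go wrong.
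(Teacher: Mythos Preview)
Your proposal is correct and follows essentially the same approach as the paper: the paper's argument is precisely the text leading up to the proposition (the expression $\Vert\rho_{\nabla}\Vert^2=(l+8n^2)+\bigl(n(c+3)+c-1\bigr)\bigl(3\Vert A\Vert^2-c(n+1)\bigr)+E$ together with the expansion of $E$), summarized at the end by ``Summarizing, we conclude''. You simply make explicit the bookkeeping the paper leaves implicit --- evaluating $l=n\bigl(\tfrac{n(c+3)+c-1}{2}\bigr)^2+4n^2$, using $(A_{\xi},A_{\xi})=n$ to rewrite $\sum_i(A_{\varphi e_i},A_{\varphi e_i})=\Vert A\Vert^2-n$, and gathering the $n^2$ constants --- and each of these steps is correct.
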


To compute $\Vert R_{\nabla}\Vert^2$, we notice that
\begin{eqnarray*}
\begin{array}{ll}
R_{\nabla}(f_i,f_j,f_k,f_l) & =R(f_i,f_j,f_k,f_l)+2g(A_{f_i}f_j,A_{f_k}f_l)\cr
\\
& \quad - g(A_{f_j}f_k,A_{f_i}f_l)-g(A_{f_k}f_i,A_{f_j}f_l)\,,
\end{array}
\end{eqnarray*}
we consider the following tensor of type $(0,4)$ associated to the horizontal distribution
$$V(X,Y,Z,Z')= 2g(A_X Y,A_Z Z')- g(A_Y Z,A_X Z')-g(A_Z X ,A_Y Z')$$
and we set $$\Vert V \Vert^2 =
\sum_{i,j,k,l=1}^{n+1}V(f_i,f_j,f_k,f_l)^2.$$ We can write:
\begin{eqnarray*}
\begin{array}{ll}
\Vert R_{\nabla}\Vert^2 & = \sum\limits_{i,j,k,l=1}^{n+1}(R_{\nabla}(f_i,f_j,f_k,f_l))^2 \cr
\\
& =\sum\limits_{i,j,k,l=1}^{n+1}(R(f_i,f_j,f_k,f_l))^2
+ 2\sum\limits_{i,j,k,l=1}^{n+1}R(f_i,f_j,f_k,f_l)\{2g(A_{f_i}f_j,A_{f_k}f_l)\cr
\\
& \quad - g(A_{f_j}f_k,A_{f_i}f_l)-g(A_{f_k}f_i,A_{f_j}f_l)\} +\Vert V \Vert^2\,.
\end{array}
\end{eqnarray*}
Then
\begin{eqnarray}
l'&=& \sum\limits_{i,j,k,l=1}^{n+1} R(f_i,f_j,f_k,f_l)^2\\
&=&\sum\limits_{i,j,k,l=1}^{n} R(\varphi e_i,\varphi e_j,\varphi
e_k,\varphi e_l)^2  + 4 \sum\limits_{i,k=1}^n R(\varphi e_i, \xi,
\varphi e_k, \xi)^2 \nonumber
\\
& =& (\frac{c+3}{4})^2
\sum\limits_{i,j,k,l=1}^n(\delta_{jk}\delta_{il}-\delta_{ik}\delta_{jl})^2
+ 4\sum_{i,k}\delta_{ik}^2\nonumber
\cr\\
& =& \frac{(c+3)^2}{16}[2n^2 -2 \sum_{i,j}\delta_{ij}]+4n=
\frac{(c+3)^2(n-1)n}{8} + 4n\,,\nonumber
\end{eqnarray}
and thus,
\begin{eqnarray*}
\Vert R_{\nabla}\Vert^2 & = &l'
+2\sum\limits_{i,j,k,l=1}^{n}\frac{c+3}{4}(\delta_{ik}\delta_{jl}-\delta_{il}\delta_{jk})[2g(A_{\varphi
e_i}\varphi e_j,A_{\varphi e_k}\varphi e_l)
\\
&&  -g(A_{\varphi e_j}\varphi e_k,A_{\varphi e_i}\varphi
e_l)-g(A_{\varphi e_k}\varphi e_i,A_{\varphi e_j}\varphi e_l)]
\\
&&   + 8 \sum_{i,k} R(\varphi e_i, \xi, \varphi e_k,
\xi)[2g(A_{\varphi e_i} \xi,A_{\varphi e_k} \xi) - g(A_{\xi}\varphi
e_k,A_{\xi}\varphi e_i)]
\\
& = &l' + \Vert V \Vert^2 + 2\frac{c+3}{4}\sum_{i,j}[2g(A_{\varphi
e_i}\varphi e_j,A_{\varphi e_i}\varphi e_j)
\\
&&   - g(A_{\varphi e_j}\varphi e_i,A_{\varphi e_i}\varphi
e_j)-g(A_{\varphi e_i}\varphi e_i,A_{\varphi e_j}\varphi e_j)]
\\
&&  -2\frac{c+3}{4}\sum_{i,j}[2g(A_{\varphi e_i}\varphi
e_j,A_{\varphi e_j}\varphi e_i) \cr
\\
&&  - g(A_{\varphi e_j}\varphi e_j,A_{\varphi e_i}\varphi
e_i)-g(A_{\varphi e_j}\varphi e_i,A_{\varphi e_j}\varphi e_i)] + 24n
\\
& =&l' + 24n -3n(c+3) +3(c+3)\Vert A \Vert^2 + \Vert V \Vert^2\,.
\end{eqnarray*}
Therefore
\begin{equation}\label{Rnabla}
\Vert R_{\nabla}\Vert^2= \frac{(c+3)^2(n-1)n}{8}  -3n(c+3)
+28n+3(c+3)\Vert A \Vert^2 + \Vert V \Vert^2\,.
\end{equation}
On the other hand, we get
\begin{eqnarray*}
\begin{array}{ll}
\sum_j V(f_i,f_j,f_k,f_j)&=  \sum_j [2g(A_{f_i}f_j,A_{f_k}f_j)
 - g(A_{f_j}f_k,A_{f_i}f_j)-g(A_{f_k}f_i,A_{f_j}f_j)]\cr
\\
& = 3\sum_j g(A_{f_i}f_j,A_{f_k}f_j) =3(A_{f_i},A_{f_k})\,,
\end{array}
\end{eqnarray*}
and thus
\begin{equation}\label{e:last}
(C_{24} V)(f_i,f_k) =3(A_{f_i},A_{f_k})\,,
\end{equation}
where $C_{24}$ denotes the contraction of the tensor with respect to
the indices 2 and 4. The Hilbert-Schmidt norm of the $(0,2)$ tensor
$C_{24} V$ along the horizontal distribution satisfies
\begin{eqnarray}\label{e:2.36}
\Vert C_{24} V \Vert^2&=& 9\sum\limits_{i=1}^n(A_{\varphi
e_i},A_{\varphi e_i})^2 + 18\sum\limits_{i<j}^{n+1}
(A_{f_i},A_{f_j})^2 +9n^2.
\end{eqnarray}
Summarizing, by Proposition \ref{ab}, we obtain that
\begin{eqnarray}\label{e:new}
b_2&=& (n+1)a_2+\frac{1}{12}\int_M 2\tau \tau_{\nabla}+6\Vert
\rho_{\nabla}\Vert^2 - \Vert
R_{\nabla}\Vert^2 dv_g\\
&=&(n+1)a_2+\frac{1}{12}\int_M 6\Vert  (C_{24} V)\Vert ^2- \Vert V \Vert^2 dv_g\nonumber\\
&&
+\frac{((c+3)(-3 + 12  n + 6  n^2 ) + (c-1)(-9 + 3 n ))}{12}\int_M \Vert A \Vert^2 dv_g\nonumber\\
&&+\frac{\mathrm{Vol_g(M)}}{12}\Big(42 n^2- 28 n+
  (c+3)n(3  + 11 n + 4 n^2)\nonumber\\
&& + \frac{1}{8} (c+3)^2n(-11  - 15 n  + 13n^2  + 4n^3 )\nonumber\\
  && +
  (c-1)n(27  + 2 n) + \frac{1}{8} (c-1)^2(-36 + 3 n  )\nonumber \\&&
   + \frac{1}{4} (c+3)(c-1)(-6 - 24 n + 2 n^2 +
        n^3) \Big).\nonumber
\end{eqnarray}

By \eqref{e:new}  and Theorems \ref{T1}, \ref{T2}, \ref{t:2.3}, we
now get our main result.
\begin{theorem}\label{t:main}
 Let $(M^{2n+1}, \varphi, \xi,\eta,g)$ and $(M_0^{2n_0+1},\varphi_0, \xi_0, \eta_0, g_0)$ be
compact isospectral Sasakian manifolds with constant
$\varphi$-sectional curvature $c$ and  constant $\varphi_0$-sectional curvature
$c_0$ respectively. If $\mathcal L$ and ${\mathcal L}_0$ are
 Riemannian minimal Legendre  foliations  on $M$ and $M_0$
 such that $Spec({\mathcal L},{\mathcal J}_{\nabla})=Spec({\mathcal L}_0,{\mathcal J}_{\nabla_0})$, then
\begin{itemize}
\item[\rm 1)] $\dim\, M = \dim\, M_0\,, \quad Vol(M)=Vol(M_0)\,,\quad c=c_0$,
\item[\rm 2)] $\int_M \Vert A \Vert^2 dv_g
= \int_{M_0} \Vert A_0 \Vert^2 dv_{g_0}\,,\quad \int_M \Vert T \Vert^2 dv_g
= \int_{M_0} \Vert T_0 \Vert^2 dv_{g_0}$,
\item[\rm 3)] $\int_M [6\Vert C_{24} V\Vert^2 - \Vert V \Vert^2]dv_g
= \int_{M_0} [6 \Vert C_{24} V_0\Vert^2 - \Vert V_0 \Vert^2]dv_{g_0}$.
\end{itemize}
\end{theorem}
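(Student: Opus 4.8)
The plan is to obtain 1) and 2) directly from Theorem \ref{t:2.3} and to squeeze 3) out of the closed formula \eqref{e:new} for $b_2$. First note that the hypotheses of Theorem \ref{t:main} --- namely $\mathrm{Spec}(M,g)=\mathrm{Spec}(M_0,g_0)$ together with $\mathrm{Spec}(\mathcal L,\mathcal J_\nabla)=\mathrm{Spec}(\mathcal L_0,\mathcal J_{\nabla_0})$ --- are exactly the assertion that $\mathcal L$ and $\mathcal L_0$ are isospectral Riemannian foliations in the sense of Theorem \ref{T2}. Since both manifolds carry constant $\varphi$-sectional curvature, Theorem \ref{t:2.3}a) gives $\dim M=\dim M_0$, $\mathrm{Vol}(M)=\mathrm{Vol}(M_0)$ and $c=c_0$, which is 1); and Theorem \ref{t:2.3}b), c) give $\int_M\Vert A\Vert^2 dv_g=\int_{M_0}\Vert A_0\Vert^2 dv_{g_0}$ and $\int_M\Vert T\Vert^2 dv_g=\int_{M_0}\Vert T_0\Vert^2 dv_{g_0}$, which is 2).

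For 3) I would use that $b_2$ is a spectral invariant of the Jacobi operator, so that isospectrality of the foliations forces $b_2(\mathcal J_\nabla)=b_2(\mathcal J_{\nabla_0})$ (equivalently, this follows by combining Theorem \ref{T2}iii), iv) with $n+1=n_0+1$). Now apply \eqref{e:new} to each manifold. The right-hand side of \eqref{e:new} splits as $(n+1)a_2$; the integral $\tfrac{1}{12}\int_M[6\Vert C_{24}V\Vert^2-\Vert V\Vert^2]dv_g$; a term $\tfrac{1}{12}\big((c+3)(-3+12n+6n^2)+(c-1)(-9+3n)\big)\int_M\Vert A\Vert^2 dv_g$; and a numerical term $\tfrac{1}{12}\mathrm{Vol}_g(M)\,Q(c,n)$, where $Q(c,n)$ is the explicit polynomial in $c$ and $n$ displayed at the end of \eqref{e:new}. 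By 1) we have $n=n_0$, $c=c_0$ and $\mathrm{Vol}_g(M)=\mathrm{Vol}_{g_0}(M_0)$; hence the coefficient of $\int\Vert A\Vert^2$ and the numerical term coincide for the two manifolds, and likewise $a_2=a_2^0$ --- either because $a_2$ is a spectral invariant of $\Delta_g$, or directly because Proposition \ref{ab} exhibits $a_2$ as that same function of $n$, $c$ and $\mathrm{Vol}_g(M)$. By 2) we have $\int_M\Vert A\Vert^2 dv_g=\int_{M_0}\Vert A_0\Vert^2 dv_{g_0}$. Subtracting the two instances of \eqref{e:new} and using $b_2=b_2^0$, all terms cancel except the $V$-integral, leaving $\int_M[6\Vert C_{24}V\Vert^2-\Vert V\Vert^2]dv_g=\int_{M_0}[6\Vert C_{24}V_0\Vert^2-\Vert V_0\Vert^2]dv_{g_0}$, which is 3).

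There is no deep obstacle at this stage: the substantive work --- deriving \eqref{e:new}, i.e. writing $b_2$ in terms of $\Vert A\Vert^2$, $\Vert C_{24}V\Vert^2$, $\Vert V\Vert^2$ and curvature constants --- has already been done. The only point needing a little care is to check that in \eqref{e:new} both the coefficient multiplying $\int\Vert A\Vert^2$ and the polynomial $Q(c,n)$ depend on the manifold only through the invariants $n$, $c$, $\mathrm{Vol}_g(M)$, so that 1) and 2) genuinely annihilate every term except the one built from $V$; this is transparent from the displayed form of \eqref{e:new}.
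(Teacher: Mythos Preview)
Your proposal is correct and follows essentially the same approach as the paper: the paper simply states that Theorem~\ref{t:main} follows from \eqref{e:new} together with Theorems~\ref{T1}, \ref{T2}, and \ref{t:2.3}, and you have spelled out precisely how those ingredients combine. In fact your write-up is more detailed than the paper's own one-line proof, and every step you describe (invoking Theorem~\ref{t:2.3} for 1) and 2), then subtracting the two instances of \eqref{e:new} using $b_2=b_2^0$, $a_2=a_2^0$, $n=n_0$, $c=c_0$, $\mathrm{Vol}(M)=\mathrm{Vol}(M_0)$, and the equality of the $\Vert A\Vert^2$-integrals) is exactly what the paper intends.
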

\section{Concluding remarks}

Let $\mathcal L$ be a Riemannian Legendre foliation with totally
geodesic leaves on $(M,g)$ and assume that $M$ has the constant
curvature $c=1$ and that $2n+1=\dim M$. In this particular case, we
would like to point that the condition 3) of Theorem \ref{t:main} is
implied by 1). Indeed, for any Riemannian totally geodesic foliation
$\mathcal L$ on a constant curvature space $M$ with
 $\dim Q=\dim L+1$, one can see that
 $$g(A_YW,A_YW)=g(Y,Y)g(W,W),\ \mathrm{for\ any}\ W\in L\ \mathrm{ and\ for\ any}\ Y\in Q,$$
 $A_X:L\to Q^{\perp X}=\{Y\in Q \,|\, g(Y,X)=0\}$ is a bijection for any
 unit vector $X$,  and $R_\nabla$ has the constant curvature $4$ (see the
 argument of \cite[Prop.~4.6]{baditoiu}). Thus, we simply have
\begin{eqnarray*}
 (A_{\varphi e_i},A_{\varphi e_i})&=&\sum_{k=1}^{n} g(\varphi e_i,\varphi
 e_i)g(e_k,e_k)=n,\\
(A_{ f_i},A_{f_j})&=&\sum_{k=1}^{n+1} g(f_i,f_j)g(e_k,e_k)=0,\ \
\mathrm{for\ any }\ i<j,
\end{eqnarray*}
and therefore, by \eqref{e:2.36}, $\Vert  C_{24}V\Vert ^2=18n^2$. We
easily see that
\begin{eqnarray*}
\Vert V\Vert ^2&=&\sum_{i,j,k,l=1}^{n+1}V(f_i,f_j,f_k,f_l)^2=\sum_{i,j,k,l=1}^{n+1}(R_\nabla(f_i,f_j,f_k,f_l)-R(f_i,f_j,f_k,f_l))^2\\
    &=&\sum_{i,j,k,l=1}^{n+1}[(4-1)(\delta_{ik}\delta_{jl}-\delta_{il}\delta_{jk})]^2=18n^2+18n.
\end{eqnarray*}
This concludes that
\begin{eqnarray}
 \int_M [6\Vert C_{24} V\Vert^2 - \Vert V
\Vert^2]dv_g=(90n^2-18n)Vol(M).
\end{eqnarray}

\begin{example}\label{example1}
Let $(S^3,\varphi, \xi,\eta,g)$ be the standard contact metric
structure on $S^3$, which we now recall. Let $\mathbb
H=\{x_1+ix_2+jx_3+kx_4\,|\, x_1,x_2,x_3,x_4\in\mathbb R\}$ be the
algebra of quaternion numbers, where $i^2=j^2=k^2=-1,\ ij=-ji=k$.
The set of unit quaternions is identified with $S^3$. Let $(I,J,K)$
be the quaternionic structure on $\mathbb H$ given for
 $I,J,K:\mathbb H\to\mathbb H$, by $I(h)=ih$, $J(h)=jh$, $K(h)=kh$ for any $h\in
\mathbb H$. Let $N$ be the unit outer normal vector field on $S^3$ and let
$g$ be the Riemann metric with constant curvature $c=1$. We set
$\xi=-IN$, $\eta$ the dual form of $\xi$; $\varphi(Z)$ the
projection of $I(Z)$ onto tangent space of $S^3$, for any vector
field $Z$ of $S^3$. Note that $(\varphi, \xi,\eta,g)$ is the
standard contact metric on $S^3$ and the its $\varphi$-sectional
curvature is $c=1$.

Let $(x_1,x_2,x_3,x_4)$ be the Cartesian coordinate system on
$\mathbb R^4=\mathbb H$. It is easy to see that
 $$\xi=x_2\frac{\partial}{\partial x_1}-x_1\frac{\partial}{\partial x_2}
+x_4\frac{\partial}{\partial x_3}-x_3\frac{\partial}{\partial x_4}.
 $$
Setting $W=-JN$ and $Y=-KN$, we have
$$W=x_3\frac{\partial}{\partial x_1}-x_4\frac{\partial}{\partial x_2}
 -x_1\frac{\partial}{\partial x_3}
 +x_2\frac{\partial}{\partial x_4};$$
 $$ Y=x_4\frac{\partial}{\partial
x_1}+x_3\frac{\partial}{\partial x_2} -x_2\frac{\partial}{\partial
x_3}-x_1\frac{\partial}{\partial x_4}.
 $$
One can easily compute the Lie brackets between these  vectors:
$[W,\xi]=-2Y$, $[Y,\xi]=2W$, $[W,Y]=2\xi$ (see \cite{jayne}). The
distributions $L=\mathrm{span}\{W\}$ and $L'=\mathrm{span}\{Y\}$
define two non-degenerate Legendre foliations (see \cite[Example
7.1]{jayne}). Since $[W,Y]=2\xi$ and $\varphi(W)=Y$, by \cite[Lemma
6.6]{jayne}, $L$ and $L'$ are Riemannian Legendre foliations on the
Sasakian space form $S^3$ and both of them are totally geodesic. By
theorem \ref{t:main}, any  Riemannian minimal Legendre  foliations
on a compact Sasakian space form $M_0$ with $\varphi_0$-sectional
curvature $c_0$, isospectral to the foliation $\mathcal L$ (defined
above) on the standard Sasakian space form $(S^3,\varphi,\xi,\eta)$
is totally geodesic, $c_0=1$, $\dim M_0=3$, and
 $$\int_M [6\Vert C_{24} V\Vert^2
- \Vert V \Vert^2]dv_g = 72 \mathrm{Vol}(M_0)=72 \mathrm{Vol}(M),$$
$$\int_M \Vert A\Vert^2 dv_g =2Vol(M).$$
\end{example}

It is well known that a typical example of a Sasakian space form is a $\mathcal
D$-homothetic deformation of the standard contact metric structure
of an odd-dimensional sphere $S^{2n+1}$, which we now recall (see
\cite[Example 7.4.1]{Bl1}). For a contact metric structure
$(\varphi,\xi,\eta,g)$, one defines the $\mathcal D$-homothetic
deformation $(\bar\varphi,\bar\xi,\bar\eta,\bar g)$
 by
$$
 \bar\varphi=\varphi,\ \ \
 \bar\xi=\frac{1}{a}\xi,\ \ \
 \bar\eta=a\eta,\ \ \
 \bar g=ag+a(a-1)\eta\otimes\eta,
$$
where $a$ is a positive constant (see \cite[p.~114]{Bl1}). By
\cite[Theorem 7.15]{Bl1}, a compact simply connected Sasakian space
form with $\varphi$-sectional curvature $c>-3$ is a $\mathcal
D$-homothetic deformation of the standard contact metric structure
on $S^{2n+1}$ and $c=\frac{4}{a}-3$ (for some $a>0$). Since
$\mathrm{Ker}\, \bar\eta=\mathrm{Ker}\, \eta$, the problem of
finding Riemannian Legendre foliations on such a compact Sasakian
space form (with $c>-3$) reduces to the one on the standard sphere
$S^{2n+1}$ (i.e. c=1).

{\bf Case $n=1$}. One can apply a $\mathcal D$-homothetic
deformation to Example 1 to obtain an example   for any $c>-3$.

{\bf Case $n=2$}. From \cite{gg}, there are no Riemannian foliations
with two-dimensional leaves on a standard sphere, which in
particular means that there are no Riemannian Legendre foliations on
  $S^5$.

{\bf Case $n=3$}. By \cite[Theorem 5.3]{gg}, we get, in particular,
  that any Riemannian foliation with 3-dimensional leaves
on $S^7$  is given uniquely (up to equivalence) by a direct  sum of
irreducible unitary representations of $SU(2)$, namely
$\rho_1\oplus\rho_1$, or by $\rho_3$, where $\rho_k$ is the action
of $SU(2)$ on the set of complex homogeneous polynomials  in two
variables  of degree $k$.\\
 Note that $\rho_1\oplus\rho_1$
corresponds to the Hopf fibration $S^7\to S^4$ (see \cite{gg}),
which is not a Legendre foliation. In fact, no leaf of
$\rho_1\oplus\rho_1$ is Legendrian, simply because $L$, the tangent
distribution of the leaves, is generated by $-\xi=IN$, $JN$, $KN$,
where $(I,J,K)$ is the standard quaternionic structure on $\mathbb
H^2=\mathbb R^8$ and $N$ is the unit outer vector
field to $S^7$ (see \cite[p.~265]{esc}).\\
In \cite[p.~365]{ohn}, Ohnita constructed a unique minimal
Legendrian orbit on $S^7$ under the action of $\rho_3$, which  means
that  only one leaf of the Riemannian foliation given by $\rho_3$ on
$S^7$ is both minimal and Legendrian. This concludes that $\rho_3$
does not provide a Riemannian Legendre foliation with minimal
leaves.\\

Finally, another typical example of a Riemannian Legendre foliation with
totally geodesic leaves is given by the tangent sphere bundle
$\pi:T_1P\to P$ of a Riemannian manifold $(P,h)$. Assume that $T_1P$
is endowed with the standard contact metric structure.

If $\dim P>2$, then $T_1P$ is never a Sasakian space form (see
\cite{opr}). Note that if $P$ has constant curvature, then $T_1P$
admits a non-Sasakian contact metric structure of constant
$\varphi$-sectional curvature $c^2$ if and only if $c=2\pm\sqrt{5}$
(see \cite[Theorem 9.9]{Bl1}).

If $\dim P=2$ and if $T_1P$ is a Sasakian space form, then $P$ has
constant curvature $c=1$ (see \cite[Theorem 9.3]{Bl1}). Note that
$T_1S^2\simeq\mathbb RP^3$ (see \cite[p.~142]{Bl1}) and the
Riemannian Legendre foliation on the universal cover of $T_1S^2$ is
equivalent to Example 1.

\begin{acknowledgments} Gabriel B\u adi\c toiu would like to thank Professor
Stefano Marchiafava for hospitality, support and useful discussions
on this topic. The first author was supported by grant COFIN PRIN
2007 ``Geometria Riemanniana e Strutture Differenziabili'' and by a
grant of the Romanian National Authority for Scientific Research,
CNCS - UEFISCDI, project number PN-II-ID-PCE-2011-3-0362. The second
author was supported by grant CNCSIS 34699/2006 in Romania.
\end{acknowledgments}


\begin{thebibliography}{99}
 \bibitem{baditoiu} G. B\u adi\c toiu, {\it Classification of Pseudo-Riemannian submersions
with totally geodesic fibres from pseudo-hyperbolic spaces},
preprint arXiv.org:1001.4490v2.
 \bibitem{Besse} A. Besse, {\it Einstein Manifolds}.
 Results in Mathematics and Related Areas (3), vol. 10, Springer-Verlag,
Berlin, 1987.
 \bibitem{Bl} D.E. Blair, {\it Contact manifolds in Riemannian geometry}. Lecture Notes in Math. 509,
            Springer-Verlag, 1976.
\bibitem{Bl1} D.E. Blair, {\it Riemannian geometry of contact and symplectic manifolds}. Birkh\"auser, Boston,
2002.
\bibitem{opr} S. L. Dru\c t\u a-Romaniuc and V. Oproiu,
{\it The Holomorphic $\varphi$-Sectional Curvature of Tangent Sphere
Bundles with Sasakian Structures}.  An. Stiint. Univ. Al. I. Cuza
Iasi, Ser. Nou\u a, Mat. {\bf 57}(2011), Supplement,  75--86. DOI:
10.2478/v10157-011-0004-5.
\bibitem{esc} R.H. Escobales, Jr.,  {\it Riemannian submersions with totally
geodesic  fibers}. J. Differential Geom. \textbf{10} (1975),
253--276. MR{0370423 (51  \#6650)}.
\bibitem{FIP} M. Falcitelli, S. Ianus and A.M. Pastore, {\it Riemannian
submersions and related topics}. World Scientific, Singapore, 2004.
\bibitem{Gi} P.B. Gilkey, {\it Invariance theory, the heat equation and the Atiyah-Singer index theorem}.
 Publish or Perish, Wilmington, Delaware 1984.
\bibitem{gg} D. Gromoll and K. Grove,  {\it The low-dimensional metric foliations of {E}uclidean
spheres}.  J. Differential Geom. \textbf{28} (1988), no.~1,
143--156. MR{950559  (89g:53052)}
\bibitem{jayne} N. Jayne, {\it Legendre foliations on Contact Metric
Manifolds}, Ph.D. thesis, Massey University, 1992
\bibitem{NiToVa} S. Nishikawa, Ph. Tondeur and L. Vanhecke, {\it Spectral Geometry for Riemannian
 foliations}, Ann. Global Anal. Geom. {\bf 10}, No.3, (1992), 291--304.
\bibitem{ohn} Y. Ohnita, {\it Stability and rigidity of special Lagrangian cones over certain
minimal Legendrian orbits}. Osaka J. Math. {\bf 44} (2007),
305–-334.
\bibitem{Ra} A. Ranjan, {\it Structural equations and an integral formula for foliated manifolds}.
Geom. Dedicata, {\bf 20}, (1986), 85--91.
\bibitem{Rosenberg} S. Rosenberg,
{\it The {L}aplacian on a {R}iemannian manifold}.
    London Mathematical Society Student Texts 31,
    Cambridge University Press,
    Cambridge 1997.
\bibitem{To} Ph. Tondeur, {\it Foliations on Riemannian manifolds}. Springer-Verlag, New York, 1988.
\bibitem{ton} Ph. Tondeur, {\it Geometry of foliations}. Monographs in Mathematics 90,
 Birkh\"auser Verlag, Basel, 1997.
\bibitem{U} H. Urakawa, {\it Spectral geometry of the second variation operator of harmonic maps}, Illinois J. Math.,
 {\bf 33} (1989), 250--267.
\end{thebibliography}
\end{document}